\theoremstyle{plain}
\newtheorem{theorem}{Theorem}[section]
\newtheorem{proposition}[theorem]{Proposition}
\newtheorem{lemma}[theorem]{Lemma}
\newtheorem{corollary}[theorem]{Corollary}
\newtheorem{theorem*}{Theorem}[]
\theoremstyle{definition}
\newtheorem{definition}[theorem]{Definition}
\newtheorem{examples}[theorem]{Examples}
\theoremstyle{remark}
\newtheorem{remark}[theorem]{Remark}
\newcommand{\secref}[1]{Section~\ref{#1}}
\newcommand{\thmref}[1]{Theorem~\ref{#1}}
\newcommand{\propref}[1]{Proposition~\ref{#1}}
\newcommand{\lemref}[1]{Lemma~\ref{#1}}
\newcommand{\exsref}[1]{Examples~\ref{#1}}
\newcommand{\remref}[1]{Remark~\ref{#1}}
\def\:{{\colon}}
\def\N{{\mathbb N}}
\def\map{\mathrm{Map}}
\def\cat0{\mathrm{cat}_0}
\def\cat{{\mathrm {cat}}}
\def\secat{{\mathrm {secat}}}
\def\TC{{\mathrm {TC}}}
\def\sh{\mathrm{sh}}
\def\relcat{{\mathrm {relcat}}}
\begin{document}

\title[Topological Complexity]
{Topological Complexity of H-Spaces}

\author{Gregory Lupton}
\address{Department of Mathematics,
      Cleveland State University,
      Cleveland OH 44115 U.S.A.}
\email{G.Lupton@csuohio.edu}

\author{J{\'e}r\^ome Scherer}
\address{SB MATHGEOM, MA B3 455,
Station 8, EPFL,
CH - 1015 Lausanne }
\email{jerome.scherer@epfl.ch}
\date{\today}

\keywords{Lusternik-Schnirelmann category, sectional category, topological complexity, H-space}

\subjclass[2010]{55M30, 55S40, 57T99, 70Q05}

\thanks{The first-named author acknowledges the hospitality and support of EPFL, and the support of the Cleveland State University FRD grant program. The second-named author is partially supported by FEDER/MEC grant MTM2010-20692.  Both authors acknowledge the support of the Swiss National Science Foundation (project IZK0Z2\_133237).}

\begin{abstract}
Let $X$ be a (not-necessarily homotopy-associative) H-space.  We show that $\TC_{n+1}(X) = \cat(X^n)$, for $n \geq 1$, where $\TC_{n+1}(-)$ denotes the so-called higher topological complexity introduced by Rudyak, and $\cat(-)$ denotes the Lusternik-Schnirelmann category.  We also generalize this equality to an inequality, which gives an upper bound for $\TC_{n+1}(X)$, in the setting of a space $Y$ acting on $X$.
\end{abstract}

\maketitle

\section{Introduction}\label{sec:intro}

The notion of topological complexity, introduced by Farber in \cite{Far03},  is motivated by the study of motion planning algorithms in the field of topological robotics.  See  also \cite{Far04, Far06} for good introductions to the topic and its context.
In this paper we prove results that allow for effective computation of topological complexity, and of higher analogues of the same, in important cases.   

We begin with some notation and basic definitions.
We use $\cat(X)$ to denote the Lusternik-Schnirelmann (L-S) category of $X$ (normalised, so that $\cat(S^n) = 1$).  That is, $\cat(X)$ is the smallest $n$ for which there is an open covering $\{ U_0, \ldots, U_n \}$ by $(n+1)$ open sets, each of which is contractible in $X$.  See \cite{CLOT03} for a general introduction to L-S category and related topics.  
  The \emph{sectional category} of a fibration $p \colon E \to B$, denoted by $\secat(p)$, is the smallest number $n$ for which there is an open covering $\{ U_0, \ldots, U_n \}$ of $B$ by $(n+1)$ open sets, for each of which there is a local section $s_i \colon U_i \to E$ of  $p$, so that $p\circ s_i = j_i \colon U_i \to B$, where $j_i$ denotes the inclusion.  This notion generalizes that of L-S category: if $E$ is contractible, then we have $\secat(p) = \cat(B)$; in general we have $\secat(p) \leq \cat(B)$ (see \cite[Prop.9.14]{CLOT03}). 
Now let $PX$ denote the space of (free) paths on a space $X$: thus we have $PX = \map(I, X)$.  There is a fibration $P_2 \colon PX \to X\times X$, which evaluates a path at initial and final point: for $\alpha \in PX$, we have $P_2(\alpha) = \big(\alpha(0), \alpha(1)\big)$.  This is a fibrational substitute for the diagonal map $\Delta \colon X \to X \times X$.  We define the \emph{topological complexity} $\TC(X)$ of $X$ to be the sectional category $\secat\big( P_2\big)$ of this fibration.  That is, $\TC(X)$ is the smallest number $n$ for which there is an open cover $\{ U_0, \ldots, U_n \}$ of $X \times X$ by $(n+1)$ open sets, for each of which there is a local section $s_i \colon U_i \to PX$ of  $P_2$, i.e., for which $P_2\circ s_i = j_i \colon U_i \to X \times X$, where $j_i$ denotes the inclusion.

We indicate briefly the motivation for topological complexity mentioned above; for a full discussion see \cite{Far03, Far04, Far06}.  A point in $X \times X$ is a pair $(x,y)$ of points in $X$.  We imagine $X$ as the space of all states of some system, and $x$ and $y$ as initial and final states of the system.  Then a section $s\colon X\times X \to PX$ of $P_2\colon PX \to X \times X$ may be viewed as a continuous, deterministic association of a path in $X$, from initial state $x$ to final state $y$, to each pair of states $(x,y)$: a \emph{continuous motion planner}. As is easily shown, however, a global section is possible exactly when $X$ is contractible.  Since one is naturally interested in state spaces that have interesting topology, i.e., which are non-contractible, one is necessarily forced into considering \emph{discontinuous motion planners}, which correspond to a covering and local sections exactly as in our definition for $\TC(X)$.     

We also consider the ``higher analogues"  of topological complexity introduced by Rudyak in \cite{Rud10} (see also \cite{Rud10b} and \cite{BGRT10}).  For this notion, let $n \geq 2$ and consider the fibration
$$P_n \colon PX \to X \times \cdots \times X = X^n,$$
defined by dividing the unit interval $I = [0, 1]$ into $(n-1)$ subintervals of equal length, with $n$ subdivision points $t_0 = 0, t_1 = 1/(n-1), \ldots, t_{n-1} = 1$ (thus $(n-2)$ subdivision points interior to the interval), and then evaluating at each of the $n$ subdivision points, thus:
$$P_n(\alpha) = \big(  \alpha(0), \alpha(t_1), \ldots, \alpha(t_{n-2}), \alpha(1)\big),$$
for $\alpha \in PX$.  This is a fibrational substitute for the $n$-fold diagonal $\Delta_n\colon X \to X^n$.   Then the \emph{higher topological complexity} $\TC_n(X)$ is defined as $\TC_n(X) = \secat(P_n)$.  

The ordinary topological complexity, then, corresponds to the case in which $n = 2$: $\TC_2(X) = \TC(X)$. Higher topological complexity may readily be motivated in terms of motion planners.  A point in $X^n$ is an $n$-tuple $(x, x_2, \ldots, x_{n-1}, y)$.  A section (either local or global) of $P_n$ corresponds to a deterministic association of a path in the state space $X$, from initial state $x$ to final state $y$, and passing through the specified intermediate states $x_2, \dots, x_{n-1}$ in that order.  

Our first main result is as follows (see \secref{sec:TC}):

\begin{theorem*}\label{introthm: TC H-space}
Let $X$ be a connected, CW $H$-space.  Then, for $n \geq 1$,  we have $\TC_{n+1}(X) = \cat(X^n)$.  In particular, we have $\TC(X) = \cat(X)$. 
\end{theorem*}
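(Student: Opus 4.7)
The plan is to prove the equality $\TC_{n+1}(X) = \cat(X^n)$ by establishing both inequalities separately. The lower bound $\cat(X^n) \leq \TC_{n+1}(X)$ will hold for any path-connected CW space, while the upper bound will require the H-space structure in an essential way.

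For the lower bound, I would restrict the fibration $P_{n+1} \colon PX \to X^{n+1}$ along the slice inclusion $\iota \colon X^n \hookrightarrow X^{n+1}$, $(x_1, \ldots, x_n) \mapsto (e, x_1, \ldots, x_n)$, where $e$ denotes a basepoint. The pullback $\iota^* P_{n+1}$ has total space homeomorphic to the based path space $P_e X$, which is contractible, and maps to $X^n$ by $\alpha \mapsto (\alpha(t_1), \ldots, \alpha(t_n))$. A fibration over $X^n$ with contractible total space has sectional category equal to $\cat(X^n)$, and any open cover of $X^{n+1}$ with sections of $P_{n+1}$ restricts via $\iota$ to an open cover of $X^n$ with sections of $\iota^* P_{n+1}$, giving $\cat(X^n) \leq \TC_{n+1}(X)$.

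For the upper bound, let $\mu$ denote the H-space multiplication with unit $e$, and fix a categorical open cover $\{U_0, \ldots, U_k\}$ of $X^n$, $k = \cat(X^n)$, with contractions $H_i \colon U_i \times I \to X^n$ satisfying $H_i(\vec x, 0) = \vec x$ and $H_i(\vec x, 1) = (e, \ldots, e)$. The key tool is the shearing map $\sigma \colon X \times X \to X \times X$, $(x, y) \mapsto (x, \mu(x, y))$, which for a connected CW H-space is a homotopy equivalence: on each $\pi_k$ it acts by the invertible map $(\alpha, \beta) \mapsto (\alpha, \alpha + \beta)$, and Whitehead's theorem yields the equivalence. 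Its homotopy inverse furnishes a continuous ``left quotient'' map $q \colon X \times X \to X$ with $\mu(x, q(x, y)) \simeq y$. Setting $\phi(x_0, x_1, \ldots, x_n) = (q(x_0, x_1), \ldots, q(x_0, x_n))$ and $V_i = \phi^{-1}(U_i)$ yields an open cover of $X^{n+1}$. Over each $V_i$, the paths $\widetilde{\gamma}_j(t) = \mu(x_0, H_i^{(j)}(\phi(x_0, \vec x), t))$ run, up to homotopy, from $x_j$ at time $0$ to $x_0$ at time $1$ in $X$. I would then concatenate and reparametrize these paths (with forward and reversed portions) to produce a single path in $X$ passing through $x_0, x_1, \ldots, x_n$ at the subdivision times $t_0, t_1, \ldots, t_n$, hence a section of $P_{n+1}$ over $V_i$.

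The hard part will be handling the ``up to homotopy'' aspect: $\mu(x_0, q(x_0, x_j))$ is only homotopic to $x_j$, and $\mu(x_0, e)$ is only homotopic to $x_0$, with no associativity available to make either strict. I would correct for this by prepending and appending short paths to each $\widetilde{\gamma}_j$, supplied continuously in $x_0$ and $x_j$ by the natural homotopies coming from the shearing equivalence and the unit axiom. Once these continuous corrections are in place, the stitching-together of the concatenated path becomes elementary path combinatorics.
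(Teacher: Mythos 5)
Your proposal is correct, and its skeleton matches the paper's: the lower bound $\cat(X^n)\leq\TC_{n+1}(X)$ is obtained exactly as in the paper (pulling $P_{n+1}$ back along the basepoint slice to the contractible based path space), and the upper bound rests on the same key object, a homotopy difference map $D=q\colon X\times X\to X$ with $\mu(x,q(x,y))\simeq y$, followed by applying $\mu(x,-)$ to paths ending at the unit. The differences are in how the two ingredients are realized. For the existence of $q$, you invert the shear map $(x,y)\mapsto(x,\mu(x,y))$ via Whitehead's theorem, using that $\mu$ induces addition on homotopy groups; the paper instead invokes James's theorem that $[A,X]$ is an algebraic loop for any CW complex $A$, solving $[\pi_1]\cdot[D]=[\pi_2]$ in $[X\times X,X]$. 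Both are valid; your route is more self-contained (it only needs the standard Eckmann--Hilton computation plus Whitehead, and the paper itself notes in Section 4 that a right homotopy inverse of the shear map is equivalent to a difference map), while James's result is what generalizes to statements about $[A,X]$ for arbitrary $A$. For the upper bound itself, you work concretely with a categorical cover $\{U_i\}$ of $X^n$, pull it back along $\phi=f_n$, and hand-build local sections of $P_{n+1}$ by concatenating corrected paths; the paper instead pulls back the fibration $\mathcal{P}_n\colon\mathcal{P}X\to X^n$ along $f_n$ and uses the two formal properties of $\secat$ (monotonicity under pullback, and under factorization through a fibration up to homotopy), which silently absorbs all the ``only up to homotopy'' issues you rightly flag --- a homotopy section of a fibration can always be rectified to a genuine section. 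Your explicit path-correction scheme (prepending the tracks of the homotopies $\mu(x,q(x,y))\simeq y$ and $\mu(x,e)\simeq x$, then routing $x_j\to x_0\to x_{j+1}$ on each subinterval) does work and is continuous in all the data, so there is no gap; it is simply more bookkeeping than the $\secat$ formalism requires.
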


Since the value of $\cat(X^n)$ is known, or can be determined, in many cases in which $X$ is a Lie group or a more general H-space, this result provides many cases in which the (higher) topological complexity may be determined exactly. This is of great value, since $\TC(X)$, like $\cat(X)$, proves to be a very delicate invariant to compute and its exact value is known in comparatively few cases.

In the case in which $G$ is a topological group, the equality $\mathrm{TC}(G) = \mathrm{cat}(G)$ appears as 
\cite[Lem.8.2]{Far04} and its generalization $\TC_{n+1}(G) = \cat(G^n)$ appears as \cite[Th.3.5]{BGRT10}.  The proofs of both results make essential use of associativity. \thmref{introthm: TC H-space} extends these results to the more general case in which $X$ is an H-space, with no assumption on associativity.

Our second main result is the following (see \secref{sec: Homotopy Transitive}):   

\begin{theorem*}\label{introthm: TC secat shear}
Let $A\colon X\times Y \to X$ be an action of $Y$ on $X$ with strict unit. 
For $n \geq 1$ let $\sh_{n+1} \colon X \times Y^n \to X^{n+1}$ denote the shear map defined by 
$$\sh_{n+1} = \big( p^1, A\circ(p^1, p^2), \ldots, A\circ(p^1, p^{n+1})  \big).$$
Here, $p^j$ denotes projection on the $j$th factor: $p^1 \colon X \times Y^n \to X$ projects onto $X$, but for $j = 2, \ldots, n+1$, $p^j \colon X \times Y^n \to Y$ projects onto $Y$. Then we have
$$\TC_{n+1}(X) \leq \big(\secat(\sh_{n+1}) + 1\big) \big(\cat(Y^n) + 1\big) - 1.$$
In particular, if $n = 1$, then we have $\TC(X) \leq  (\secat(\sh) + 1) (\cat(Y) + 1) - 1$, where $\sh\colon X \times Y \to X \times X$ is defined as $\sh = (p^1, A)$. 
\end{theorem*}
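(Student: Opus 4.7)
The plan is to mesh a sectional covering of $X^{n+1}$ realizing $\secat(\sh_{n+1})$ with a categorical covering of $Y^n$ to produce a sectional covering for $P_{n+1} \colon PX \to X^{n+1}$. Set $k = \secat(\sh_{n+1})$ and $m = \cat(Y^n)$. First I would choose an open cover $V_0, \ldots, V_k$ of $X^{n+1}$ equipped with local sections $\sigma_i \colon V_i \to X \times Y^n$ of $\sh_{n+1}$. Writing $\sigma_i(v_0, v_1, \ldots, v_n) = (v_0, y^i_1(v), \ldots, y^i_n(v))$, the defining formula for $\sh_{n+1}$ forces $A(v_0, y^i_\ell(v)) = v_\ell$ for each $\ell = 1, \ldots, n$. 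Second, I would choose a categorical open cover $W_0, \ldots, W_m$ of $Y^n$ together with null-homotopies $H_j \colon W_j \times I \to Y^n$ satisfying $H_j(w, 0) = w$ and $H_j(w, 1) = (e, \ldots, e)$, where $e$ is the strict unit of the action.

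Now refine by intersecting: put $U_{i,j} = \{ v \in V_i : (y^i_1(v), \ldots, y^i_n(v)) \in W_j \}$ for $0 \leq i \leq k$ and $0 \leq j \leq m$. The collection $\{U_{i,j}\}$ is an open cover of $X^{n+1}$ by $(k+1)(m+1)$ sets, so it suffices to construct a local section $s_{i,j} \colon U_{i,j} \to PX$ of $P_{n+1}$ over each. For $v \in U_{i,j}$ and $\ell = 1, \ldots, n$, form the path in $X$
$$\beta^v_\ell(t) = A\bigl(v_0,\, p^\ell H_j((y^i_1(v), \ldots, y^i_n(v)), 1-t)\bigr),$$
where $p^\ell$ denotes projection on the $\ell$-th factor of $Y^n$. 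Strictness of the unit gives $\beta^v_\ell(0) = A(v_0, e) = v_0$, while the section property gives $\beta^v_\ell(1) = A(v_0, y^i_\ell(v)) = v_\ell$. Then I define $s_{i,j}(v)$ by concatenation in $PX$: on $[0, 1/n]$ traverse a reparametrization of $\beta^v_1$, and on each subinterval $[\ell/n, (\ell+1)/n]$ for $\ell = 1, \ldots, n-1$ traverse $\beta^v_\ell$ backwards to $v_0$ on the first half of the subinterval, then $\beta^v_{\ell+1}$ forwards to $v_{\ell+1}$ on the second half. By construction $s_{i,j}(v)$ passes through $v_0, v_1, \ldots, v_n$ at the subdivision points $0, 1/n, \ldots, 1$ respectively, so $P_{n+1}(s_{i,j}(v)) = v$.

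The main point to check is the bookkeeping in the last step: since $A$ is not assumed (homotopy-)associative, one cannot combine the $\beta^v_\ell$ multiplicatively through $A$ as in the group case \cite[Th.3.5]{BGRT10}. What rescues the argument is that the concatenation takes place in the path space $PX$ rather than in $Y$, so no algebraic hypothesis on $A$ beyond a strict unit is required; the unit hypothesis enters exactly in verifying $\beta^v_\ell(0) = v_0$, which is what makes the consecutive pieces match up at the endpoints $v_0$. Continuity of $s_{i,j}$ follows from that of $\sigma_i$ and $H_j$, and the inequality
$$\TC_{n+1}(X) \;=\; \secat(P_{n+1}) \;\leq\; (k+1)(m+1)-1 \;=\; (\secat(\sh_{n+1})+1)(\cat(Y^n)+1)-1$$
follows immediately. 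The $n=1$ case is the special instance in which the concatenation reduces to a single reparametrization of $\beta^v_1$.
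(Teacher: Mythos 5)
Your proof is correct and is essentially the paper's argument with the abstract machinery unwound: the paper likewise extracts local differences from a sectional cover for $\sh_{n+1}$ and obtains the factor $\cat(Y^n)+1$ by pulling back the contractible path fibration $\mathcal{P}Y\to Y^n$ along the local difference map, which amounts to exactly your intersection with a categorical cover of $Y^n$ and your use of the contractions $H_j$ to produce paths from the unit to the differences, pushed into $X$ via $A(v_0,-)$. One small point to be careful about: $\sh_{n+1}$ is not a fibration, so $\secat(\sh_{n+1})$ is defined via a fibrational substitute and the local sections $\sigma_i$ satisfy $\sh_{n+1}\circ\sigma_i\simeq \mathrm{incl}$ only up to homotopy; hence the first coordinate of $\sigma_i$ is only homotopic to $p_1$ and $A(v_0,y^i_\ell(v))$ is only homotopic to $v_\ell$, so your $s_{i,j}$ is a priori only a homotopy local section of $P_{n+1}$. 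Since $P_{n+1}$ is a genuine fibration this is immediately repaired by deforming the homotopy section to a strict one, which is precisely the adjustment the paper makes.
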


This gives a general upper bound for the (higher) topological complexity that may be applied in many situations of interest and that, in principle, may improve on other general upper bounds in those situations to which it applies.  \thmref{introthm: TC secat shear} is a direct generalization of \thmref{introthm: TC H-space}: if the action $A\colon X\times Y \to X$ is given by an H-space $X$ acting on itself by the multiplication, then \thmref{introthm: TC secat shear} reduces to \thmref{introthm: TC H-space}.

We finish this introduction with a brief outline of the paper. \secref{sec:notation} is a short section in which we review some standard facts concerning sectional category used in the sequel.  In \secref{sec:TC} we give basic inequalities concerning $\TC_{n+1}(X)$, and the proof of \thmref{introthm: TC H-space}.  We give several examples to illustrate the applicability of \thmref{introthm: TC H-space}.  In \secref{sec: Homotopy Transitive} we develop the notion of a shear map coming from an action of one space on another, and give the proof of \thmref{introthm: TC secat shear}.  The final section, \secref{sec:relcat}, is a short, informal section in which we give several interesting questions that stem from our results and examples.

\subsection*{Acknowledgements}  We thank John Oprea for many fruitful discussions on this topic and, in particular, for first showing us the argument on which our results are based.     

\section{Notational  conventions and basic facts}
\label{sec:notation}
Our conventions are that $\cat$, respectively $\secat$ (hence $\TC$), is \emph{one less than} the number of suitable open sets in a covering of $X$, respectively, of $B$. Thus, a space $X$ is contractible exactly when $\cat(X) = 0$ and we have $\cat(S^n) = 1$ for any $n \geq 1$.  Furthermore, a fibration $p \colon E \to B$ admits a section exactly when $\secat(p) = 0$ (and if $E$ is also contractible we then recover the case $\cat(B)=0$). These conventions agree with those used in \cite{BGRT10, CLOT03, MR1802847}, for example.  They entail that $\TC(S^n) = 1$ for $n$ odd, and 
$\TC(S^n) = 2$ for $n$ even. However, some authors use the ``un-normalized" version of $\secat$ (hence $\TC$), whether or not the ``normalized" $\cat$ is used.  Hence, our $\TC(X)$ will be one less than that of some authors, such as Farber in \cite{Far04}.

There are two well-known and basic facts which we use throughout the article to compare sectional categories of various maps.  We state them here; each is easily justified directly from the definitions. 
First, suppose given a fibration $p \colon E \to B$ and any map $f \colon B' \to B$.  Form the  pullback
$$\xymatrix{ Q \ar[d]_{q} \ar[r] & E \ar[d]^{p}\\
B' \ar[r]_{f} & B. }$$
Then we have $\secat(q) \leq \secat(p)$. 
Second, suppose given a fibration $p' \colon E' \to B'$ and any maps $\theta \colon Q \to E'$ and $q\colon Q \to B'$ for which the diagram
$$\xymatrix{ Q \ar[rd]_{q} \ar[rr]^{\theta} & & E' \ar[ld]^{p'}\\
&  B'}$$
is homotopy commutative.  Then we have $\secat(p') \leq \secat(q)$.  In this latter situation, a local section $s\colon B' \to Q$ of $q\colon Q \to B'$ leads initially to a local homotopy section $\theta\circ s\colon B' \to E'$ of $p'\colon E' \to B'$, which may then be adjusted into a section using the assumption that $p'\colon E' \to B'$ is a fibration.

A standard application of the pull-back square situation is as follows.  Suppose that $X$ is a based space with basepoint $x_0$. 
Write $\mathcal{P}X$ for the \emph{based} path space of maps $\alpha\colon I \to X$ with $\alpha(0) = x_0$, and let $\mathcal{P}_1\colon \mathcal{P}X \to X$ denote the evaluation map $\mathcal{P}_1(\alpha) = \alpha(1)$.
The pull-back square
\[
\xymatrix{ \mathcal{P}X \ar[d]_{\mathcal{P}_1} \ar[r] & PX \ar[d]^{P_{2}}\\
X \ar[r]_-{i_2} & X \times X}
\]
yields the inequality $\cat(X) \leq \TC(X)$, since $\cat(X) = \secat(\mathcal{P}_1)$ ($\mathcal{P}X$ is contractible). This can easily be generalized to an inequality concerning
the higher topological complexity. Write $I_2\colon X^n \to X^{n+1}$ for the inclusion $I_2(x_1, \dots, x_n) = (x_0, x_1, \dots, x_n)$, 
with $x_0 \in X$ the basepoint, and $\mathcal{P}_n \colon \mathcal{P} X \rightarrow X^n$ for the map with coordinates given by evaluation at $1/n, 2/n, \dots, 1$.
Although (for $n \geq 2$) this is not the usual based path fibration, nonetheless the space $\mathcal{P}X$ is still 
contractible so that $\secat(\mathcal{P}_n) = \cat(X^n)$.

\begin{lemma}\label{lem: TCn+1 lower bound}
Let $X$ be any connected pointed space. Then $\cat(X^n) \leq \TC_{n+1}(X)$ for any $n \geq 1$.
\end{lemma}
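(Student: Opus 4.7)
The plan is to identify $\mathcal{P}_n \colon \mathcal{P}X \to X^n$ with the pullback of the Rudyak fibration $P_{n+1} \colon PX \to X^{n+1}$ along the inclusion $I_2 \colon X^n \to X^{n+1}$, and then apply two facts already assembled in \secref{sec:notation}: sectional category decreases under pullback along a fibration, and $\secat(\mathcal{P}_n) = \cat(X^n)$ since $\mathcal{P}X$ is contractible.

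First I would verify that the square
$$
\xymatrix{ \mathcal{P}X \ar[d]_{\mathcal{P}_n} \ar[r] & PX \ar[d]^{P_{n+1}}\\
X^n \ar[r]_-{I_2} & X^{n+1} }
$$
in which the top arrow is the inclusion, is a pullback. A point of $PX \times_{X^{n+1}} X^n$ is a pair $\bigl(\alpha,(x_1,\dots,x_n)\bigr)$ satisfying $\alpha(0) = x_0$ and $\alpha(j/n) = x_j$ for $1 \leq j \leq n$. The first condition says exactly that $\alpha \in \mathcal{P}X$, and the remaining conditions merely read off the $x_j$ from $\alpha$; so the pullback is canonically $\mathcal{P}X$, and its projection to $X^n$ is $\mathcal{P}_n$. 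Since pullbacks of fibrations are fibrations, this also confirms that $\mathcal{P}_n$ itself is a fibration, a small point worth noting since $\mathcal{P}_n$ is not the usual based path fibration when $n \geq 2$.

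Having made this identification, the pullback inequality from \secref{sec:notation} gives $\secat(\mathcal{P}_n) \leq \secat(P_{n+1}) = \TC_{n+1}(X)$, and combining this with $\secat(\mathcal{P}_n) = \cat(X^n)$ yields the desired bound. The only substantive step is checking that the square is a pullback, and that is a direct unwinding of definitions; otherwise the argument is entirely formal and uses nothing beyond what the paragraph preceding the lemma has already set up.
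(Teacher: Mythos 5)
Your proposal is correct and follows exactly the paper's argument: pull back $P_{n+1}$ along $I_2$ to obtain $\mathcal{P}_n$, then use $\secat(\mathcal{P}_n)=\cat(X^n)$ together with the pullback inequality for sectional category. The explicit verification that the square is a pullback is a detail the paper leaves to the reader, but the route is the same.
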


\begin{proof}
Note that we have a pullback square
\[
\xymatrix{ \mathcal{P}X \ar[d]_{\mathcal{P}_n} \ar[r] & PX \ar[d]^{P_{n+1}}\\
X^n \ar[r]_-{I_2} & X^{n+1},}
\]
which gives $\cat(X^n) \leq \secat(P_{n+1})$ from the pullback property of $\secat$ recalled above.  
This latter is $\TC_{n+1}(X)$ by definition, and the inequality follows.  
\end{proof}

Finally, we will sometimes refer to the sectional category of an arbitrary map $f: X \rightarrow Y$ (not necessarily a fibration).
What we mean by that is that we first replace $f$ by a fibration. The sectional category does not depend on the choice of such a fibrational substitute. This allows us to deal with homotopy pull-back squares instead of pulling back a fibration. In particular we
may write equalities such as $\cat(X) = \secat( * \rightarrow X)$.

\section{(Higher) Topological Complexity and H-Spaces}
\label{sec:TC}
First we observe that, for $n \geq 1$, we have the following basic inequalities, which extend in a natural way those for $\TC(X)$ recalled in \secref{sec:intro}.

\begin{proposition}
Suppose $X$ is a connected, non-contractible CW complex.  For $n\geq 1$, we have
$$n \leq \cat(X^n) \leq \TC_{n+1}(X) \leq \cat(X^{n+1}) \leq (n+1) \cat(X).$$
\end{proposition}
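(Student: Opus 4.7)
The four inequalities split up nicely, and each is either immediate from what's already been recalled or is a standard fact. Here is my plan, inequality by inequality, read left-to-right.

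For $n \leq \cat(X^n)$: the plan is to use the cup-length lower bound on L-S category. Since $X$ is a non-contractible CW complex, there is a coefficient system for which $\widetilde H^*(X)$ contains a non-zero class $\alpha$ (for a simply-connected CW complex this is immediate from Whitehead/Hurewicz; for non-simply-connected $X$, take a non-trivial class in $H^1(X; A)$ coming from $\pi_1(X)$). Pulling $\alpha$ back along each of the $n$ projections $p_i \colon X^n \to X$ gives classes $p_1^*(\alpha), \ldots, p_n^*(\alpha)$ in reduced cohomology of $X^n$. By the Künneth theorem, their cup product is non-zero, so the cup length of $X^n$ is at least $n$, and hence $\cat(X^n) \geq n$ by the standard cup-length bound.

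For $\cat(X^n) \leq \TC_{n+1}(X)$: this is exactly \lemref{lem: TCn+1 lower bound}, which has already been proved.

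For $\TC_{n+1}(X) \leq \cat(X^{n+1})$: this is a direct application of the general inequality $\secat(p) \leq \cat(B)$ for any fibration $p \colon E \to B$ (cited in the introduction as \cite[Prop.9.14]{CLOT03}), applied to $P_{n+1}\colon PX \to X^{n+1}$.

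For $\cat(X^{n+1}) \leq (n+1) \cat(X)$: this follows by induction from the standard product inequality $\cat(A \times B) \leq \cat(A) + \cat(B)$. None of the four steps presents real difficulty; the only one that requires a small amount of thought is the leftmost, where one has to exhibit a non-zero class in reduced cohomology of $X$ without assuming simple connectivity, but this is routine.
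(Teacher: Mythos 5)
Your handling of the last three inequalities coincides with the paper's: the second is exactly \lemref{lem: TCn+1 lower bound}, the third is the general bound $\secat(p)\leq \cat(B)$ applied to the fibration $P_{n+1}\colon PX \to X^{n+1}$, and the fourth is the product inequality iterated (the CW hypothesis supplying the separation condition that the product inequality requires, as the paper notes). The only substantive difference is the leftmost inequality $n \leq \cat(X^n)$, where the paper simply cites \cite[Th.1.36, Th.1.46]{CLOT03} and you substitute a cup-length argument.

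That argument is correct, and is the standard one, whenever $X$ has non-trivial reduced \emph{integral} homology: take the first non-zero $\widetilde H_k(X;\Z)$, pass to coefficients in a field ($\Q$ or $\mathbb{F}_p$) that detects it, and then the $n$-fold cross product of the resulting class with itself is non-zero by the K\"unneth theorem over a field, with no finiteness hypotheses needed. As written, however, it has a gap: a connected, non-contractible CW complex need not carry any non-zero reduced cohomology class with \emph{constant} coefficients. If $X$ is acyclic with non-trivial (necessarily perfect) fundamental group --- for instance a two-dimensional acyclic complex such as a spine of the punctured Poincar\'e homology sphere, or the classifying space of an acyclic group --- then $\widetilde H^*(X;A)=0$ for every constant coefficient group $A$; in particular $H^1(X;A)\cong \Hom\big(H_1(X;\Z),A\big)=0$, so there is no class ``coming from $\pi_1(X)$'' in the sense you describe. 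One can try to repair this with genuinely local coefficients (a non-trivial group has cohomological dimension at least one, so $H^1(X;M)\cong H^1(\pi_1(X);M)\neq 0$ for some $\pi_1(X)$-module $M$, and the cup-length lower bound for $\cat$ remains valid with local coefficients), but then the sentence ``by the K\"unneth theorem, their cup product is non-zero'' is no longer an appeal to the off-the-shelf K\"unneth theorem and requires separate justification. So you should either restrict the cup-length argument to the case $\widetilde H_*(X;\Z)\neq 0$ and treat the acyclic case by another method, or do as the paper does and invoke the cited result of \cite{CLOT03}, which covers all non-contractible $X$.
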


\begin{proof}
The first and last of these are standard inequalities concerning $\cat(X)$ (see \cite[Th.1.36, Th.1.46]{CLOT03}; the last, which derives from the product inequality,  needs some separation hypothesis on $X$, which we have covered here by assuming $X$ is CW).  The second inequality is \lemref{lem: TCn+1 lower bound}, and the third follows from the definition of $\TC_{n+1}(X)$ together with the general inequality $\secat(p) \leq \cat(B)$ for any fibration $p\colon E \to B$. 
\end{proof}

\begin{remark}
In \cite{Rud10}, the inequality $\cat(X^n) \leq \TC_{n+1}(X)$ appears to have been overlooked.  It is taken up in \cite[Prop.3.1]{BGRT10}, although by an apparently more involved argument than that used here.
\end{remark}

We now prove our first main result enunciated in \secref{sec:intro}. 

\begin{proof}[Proof of \thmref{introthm: TC H-space}] 
Let $m \colon X \times X \to X$ denote the multiplication.  Without loss of generality we may assume the unit is strict, i.e., $m(x, x_0) = x$ and $m(x_0, x) = x$ for $x \in X$, where $x_0 \in X$ denotes the basepoint.  As is well-known by work of James \cite{MR0133132}, for any CW complex $A$, the set of pointed homotopy classes $[A,X]$ is an algebraic loop.  In particular, the equation $a\cdot z = b$, for $a, b \in [A,X]$, has a unique solution $z\in [A,X]$.  Here and in the remainder of this proof, we use ``$\cdot$" to denote the product in $[A , X]$ induced by the multiplication. Write $\pi_1, \pi_2 \colon X \times X \to X$ for the projections.  In $[X \times X, X]$, we take $a = [\pi_1]$ and $b = [\pi_2]$, and we set $D\colon X \times X \to X$ to be the unique solution to the equation $[\pi_1]\cdot [D] = [\pi_2]$.  That is, we have $\pi_1 \cdot D \sim \pi_2$.  (If $X$ were homotopy-associative with inverse, then here we could take $D(x,y) = x^{-1}y$.)      

Now use the map $D \colon X \times X \to X$ to define a map
$f_n \colon X^{n+1} \to X^n$
as $f_n(x, x_1, \ldots, x_n) = \big( D(x, x_1), \ldots, D(x, x_n) \big)$.   
Form the pullback
\begin{equation}\label{eq: square}
\xymatrix{ Q \ar[d]_{q} \ar[r] & \mathcal{P}X \ar[d]^{\mathcal{P}_n}\\
X^{n+1} \ar[r]_{f_n} & X^n, }
\end{equation}
so that we have 
$$Q = \{ (x, x_1, \ldots, x_n, \alpha) \in X^{n+1} \times \mathcal{P}X \mid \alpha(0) = x_0, \alpha(t_i) = D(x,x_i), i=1, \ldots, n  \},$$
and $q \colon Q \to X^{n+1}$  is just projection on the first $(n+1)$ coordinates.  
Finally, define a map $\theta\colon Q \to PX$,
by setting $\theta\big( (x, x_1, \ldots, x_n, \alpha) \big) = m(x, \alpha)$, which gives a path in $X$ evaluated as 
$m(x, \alpha)(t) = m\big(x, \alpha(t) \big)$ for each $t \in I$. By definition, we have 
$$P_{n+1}\circ \theta\big( (x, x_1, \ldots, x_n, \alpha) \big) = \big( m(x, x_0), m(x, D(x,x_1)), \ldots, m(x, D(x,x_n))  \big).$$
The first coordinate on the right-hand side here is $x$, since we are assuming a strict unit.    Now write $p_i\colon X^{n+1} \to X$ for projection on the $i$th coordinate, and write $p_{1,j}\colon X^{n+1} \to X^2$ for the projection $p_{1,j} = (p_1, p_{j+1})$, so that  $p_{1,j}(x, x_1, \ldots, x_n) = (x, x_j)$.  Then for $j = 1, \dots, n$, we may write the $(j+1)$st coordinate of the above expression as $m(x, D(x, x_j)) = (\pi_1\cdot D)\circ p_{1,j}\big((x, x_1, \ldots, x_n)\big)$.  Bearing in mind that $\pi_1\cdot D \sim \pi_2$, we compute that
\begin{align*}
P_{n+1}\circ \theta &= \big( p_1,  (\pi_1\cdot D)\circ p_{1,1}, (\pi_1\cdot D)\circ p_{1,2}, \ldots, (\pi_1\cdot D)\circ p_{1,n}\big) \circ q\\
&\sim \big(  p_1, \pi_2\circ p_{1,1}, \pi_2\circ p_{1,2}, \ldots, \pi_2\circ p_{1,n} \big) \circ q\\
&= \big(  p_1, p_2, p_3, \ldots, p_{n+1} \big) \circ q = q.
\end{align*} 
Thus we have a homotopy commutative diagram
\begin{equation}\label{eq: triangle}
\xymatrix{ Q \ar[rd]_{q} \ar[rr]^{\theta} & & PX \ar[ld]^{P_{n+1}}\\
&  X^{n+1}.}
\end{equation}
From diagrams (\ref{eq: square}) and (\ref{eq: triangle}) it follows that $\secat(P_{n+1}) \leq \secat(q) \leq \secat(\mathcal{P}_n)$.  The first of these is $\TC_{n+1}(X)$ and the last of these is $\cat(X^n)$ (since $\mathcal{P}X$ is contractible).  Thus, for $X$ an H-space we have $\TC_{n+1}(X) \leq \cat(X^n)$, and equality now follows from the general inequality $\cat(X^n) \leq \TC_{n+1}(X)$.   
\end{proof}

\begin{examples}\label{exs: TC H-space}
\begin{enumerate}
\item \thmref{introthm: TC H-space} applies to any compact Lie group, as was previously known. Thus we have, for instance,  $\TC\big(Sp(3)\big) = \cat\big(Sp(3)\big) = 5$ as computed
in \cite{MR2022385}, and $\TC\big(Sp(9)\big) = 8$ as follows from \cite{MR2272137}.   
\item Let $U(r)$ and $SU(r)$ denote the unitary group and special unitary group, respectively.  It is known that $\cat\big( U(r)\big) = r$ and $\cat\big( SU(r)\big) = r-1$ (see \cite{Sin75} or \cite[Prop.9.5, Th.9.47]{CLOT03}).  Although $\cat(-)$ generally behaves sub-additively with respect to products, we may see that $\cat\big(\big(U(r)\big)^n \big) = nr$ and $\cat\big(\big(SU(r)\big)^n \big) = n(r-1)$.  This is because, \emph{in these cases}, we have a binding lower bound for $\cat(-)$ given by (rational) cohomology, which equals the upper bound given by the product inequality.  Thus it follows that we have $\TC_{n+1}\big(U(r)\big) = nr$ and $\TC_{n+1}\big(SU(r)\big) = n(r-1)$. 
\item The $H$-space $S^7$ is not homotopy-associative \cite{Jam57}, and in particular is not a topological group.  Here we do have the equality $\TC_{n+1}(S^7) = n$ (see \cite[Th.8]{Far03} for $n=1$ and \cite{Rud10} for $n \geq 2$).  But by taking products of $S^7$ with any H-space $Y$ (Lie group or not), \thmref{introthm: TC H-space}  leads to new equalities $\TC_{n+1}(S^7 \times Y) = \cat\big( (S^7\times Y)^n\big)$, where $S^7 \times Y$ is not a homotopy-associative H-space (and in particular is not a topological group).  These equalities lead to many new cases in which $\TC_{n+1}(-)$ may be determined.  For example, we have $\cat\big((S^7)^r\big) = r$, hence $\TC_{n+1}\big((S^7)^r\big) = nr$ from \thmref{introthm: TC H-space}.
\item Arguing as in Example (2), we have new equalities  $\TC_{n+1}\big(S^7 \times U(r)\big) = \cat\big(\big(S^7 \times U(r)\big)^n \big) = n(r+1)$ and $\TC_{n+1}\big(S^7 \times SU(r)\big) = \cat\big(\big(S^7 \times SU(r)\big)^n \big) = nr$.
\end{enumerate}
\end{examples}

Although in general both $\cat(-)$ and $\TC_{n+1}(-)$ behave sub-additively with respect to products, there are situations in which they behave additively.   \thmref{introthm: TC H-space} leads to the following observation.

\begin{corollary}
Suppose $X$ and $Y$ are H-spaces.  For each $n \geq 1$, we have a product equality $\TC_{n+1}(X\times Y) = \TC_{n+1}(X) + \TC_{n+1}(Y)$ if and only if the identity $\cat\big( (X\times Y)^n\big) = \cat(X^n) + \cat(Y^n)$ holds.
\qed\end{corollary}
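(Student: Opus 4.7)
The plan is very short: this corollary is essentially a direct translation of the first main theorem under the observation that a product of H-spaces is again an H-space. First I would note that if $X$ and $Y$ carry H-space multiplications $m_X$ and $m_Y$ (with strict unit, after the usual reduction), then the coordinate-wise multiplication $m_X \times m_Y$ on $X \times Y$ makes the product into an H-space with strict unit $(x_0, y_0)$. No associativity or homotopy-associativity is needed, which is crucial since Theorem~\ref*{introthm: TC H-space} does not assume it either.

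Next I would apply Theorem~\ref*{introthm: TC H-space} to each of the three H-spaces $X$, $Y$, and $X \times Y$ in turn, obtaining the three equalities
\[
\TC_{n+1}(X) = \cat(X^n), \quad \TC_{n+1}(Y) = \cat(Y^n), \quad \TC_{n+1}(X \times Y) = \cat\bigl((X\times Y)^n\bigr).
\]
Adding the first two and comparing with the third, the asserted equality $\TC_{n+1}(X \times Y) = \TC_{n+1}(X) + \TC_{n+1}(Y)$ holds if and only if $\cat\bigl((X \times Y)^n\bigr) = \cat(X^n) + \cat(Y^n)$, which is the claim.

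There is really no obstacle; the only point worth flagging is making sure the hypotheses of Theorem~\ref*{introthm: TC H-space} (connected CW H-space) are preserved by taking products, which they are provided one implicitly assumes $X$ and $Y$ are connected CW complexes, as in the statement of that theorem. Since the argument is purely formal, I would expect the write-up to be two or three lines at most, concluded by \qed.
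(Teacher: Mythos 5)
Your proposal is correct and matches the paper's (implicit) argument exactly: the paper states this corollary with an immediate \qed, treating it as a direct consequence of applying the main theorem to $X$, $Y$, and the product H-space $X\times Y$. Nothing further is needed.
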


\thmref{introthm: TC H-space} also leads to other interesting relations involving products and topological complexities, such as the following.

\begin{corollary}
Let $X$ be any CW H-space.  Then for $n \geq 2$ and $k \geq 1$, we have
$$\TC_n(X^k) = \TC_{k+1}(X^{n-1}).$$
\end{corollary}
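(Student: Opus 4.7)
The plan is to reduce both sides of the asserted equality to a single Lusternik--Schnirelmann category, via the main theorem of the section (\thmref{introthm: TC H-space}), and then observe that they coincide on the nose. This should be essentially a one-line computation once we set things up correctly, so the main content is bookkeeping with the indices $n$ and $k$.

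First I would note that if $X$ is a CW H-space, then so is every Cartesian power $X^m$, with the coordinate-wise multiplication and strict unit $(x_0,\ldots,x_0)$; this product is a CW complex since $X$ is, and no associativity is required for the H-space structure to be inherited. Consequently \thmref{introthm: TC H-space} applies to $X^k$ and to $X^{n-1}$, in each case producing an equality between a higher topological complexity and the L-S category of a suitable power.

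Applied to the H-space $X^k$ with the index $(n-1)\geq 1$ (note this uses $n\geq 2$), the theorem gives
\[
\TC_n(X^k) \;=\; \TC_{(n-1)+1}(X^k) \;=\; \cat\bigl((X^k)^{n-1}\bigr) \;=\; \cat\bigl(X^{k(n-1)}\bigr).
\]
Applied to the H-space $X^{n-1}$ with the index $k\geq 1$, the theorem gives
\[
\TC_{k+1}(X^{n-1}) \;=\; \cat\bigl((X^{n-1})^k\bigr) \;=\; \cat\bigl(X^{k(n-1)}\bigr).
\]
Since the canonical homeomorphism $(X^k)^{n-1}\cong X^{k(n-1)}\cong (X^{n-1})^k$ identifies the two L-S categories, the two expressions agree and the desired identity follows.

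There is no real obstacle here, but the step one must be careful about is the hypothesis of \thmref{introthm: TC H-space}: it requires a connected CW H-space, and the theorem is stated for indices $n\geq 1$. The restrictions $n\geq 2$ and $k\geq 1$ in the corollary are precisely what is needed so that both invocations of the theorem fall within its range (the first uses index $n-1\geq 1$, the second uses index $k\geq 1$), and so that both $X^k$ and $X^{n-1}$ are connected CW H-spaces whenever $X$ is.
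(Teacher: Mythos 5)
Your proposal is correct and follows the same route as the paper's own proof: both sides are reduced to $\cat\bigl(X^{k(n-1)}\bigr)$ by applying the main theorem to the H-spaces $X^k$ and $X^{n-1}$ with the appropriate indices. The extra care you take in checking that Cartesian powers of a CW H-space are again connected CW H-spaces, and that the index restrictions $n\geq 2$, $k\geq 1$ keep both applications within the theorem's hypotheses, is exactly the bookkeeping the paper leaves implicit.
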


\begin{proof}
Both are equal to $\cat(X^{k(n-1)})$, by \thmref{introthm: TC H-space}.
\end{proof}

\begin{examples}\label{exs: TC torus}
\cite[Prop.5.1]{Rud10} gives the inequality $\TC_n(T^2) \geq 2n-2$, where $T^2 = S^1 \times S^1$ is the $2$-torus.  (\textbf{Note:}  the $\TC_n$ of  \cite{Rud10} is the number of open sets in a cover, and is thus one greater than our $\TC_n$.)  We may improve on this inequality, as does \cite[Cor.3.13]{BGRT10},  with the formula
$$\TC_n(T^k) = \cat(T^{k(n-1)}) = k(n-1),$$
for $n\geq 2$ and $k\geq 1$, with $T^k = (S^1)^k$ the $k$-torus.  Thus, for example, we have identities $\TC_3(S^1) = \TC(T^2)$ (both equal $2$), and $\TC_4(T^2) = \TC_3(T^3)$ (both equal $6$).  We also observe the sequences $\{\TC_{n+1}(T^k)\}_{n \in \N} = \{k n \}_{n \in \N}$,  as examples of the sequence Rudyak asks after in \cite{Rud10}.  
\end{examples}

We end this section by sketching a variant of the proof we have given for \thmref{introthm: TC H-space}.  We feel the variant is worth mentioning as it uses the following intermediate result, of interest in its own right.

\begin{proposition}\label{prop:difference pullback}
Let $X$ be a connected, CW H-space.  For $n \geq 1$, let $f_n \colon X^{n+1} \to X$ be the map defined in the proof of \thmref{introthm: TC H-space} using the map $D \colon X \times X \to X$.  Then there is a homotopy pullback diagram
$$\xymatrix{X \ar[r] \ar[d]_{\Delta_{n+1}} & {*} \ar[d] \\
X^{n+1} \ar[r]_-{f_n} & X^n.}$$
That is, the diagonal $\Delta_{n+1}\colon X \to X^{n+1}$ may be viewed as the \emph{fibre inclusion} of the map $f_n \colon X^{n+1} \to X^n$.
\end{proposition}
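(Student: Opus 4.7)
The plan is to exhibit $f_n$ as a projection, up to a self-homotopy-equivalence of $X^{n+1}$, and then read off the fibre. Specifically, I would define $\Phi \colon X^{n+1} \to X^{n+1}$ by
\[
\Phi(x, x_1, \ldots, x_n) = \bigl(x, D(x, x_1), \ldots, D(x, x_n)\bigr),
\]
so that $f_n = \pi \circ \Phi$, where $\pi \colon X^{n+1} \to X^n$ forgets the first coordinate. The strict pullback of $\pi$ along the basepoint inclusion $* \to X^n$ is obviously $X$, with fibre inclusion $i \colon X \to X^{n+1}$, $i(x) = (x, x_0, \ldots, x_0)$. Hence, once $\Phi$ is shown to be a weak equivalence, the square in the proposition becomes a homotopy pullback, with fibre inclusion $\Phi^{-1} \circ i$ up to homotopy.

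The crucial step is verifying that $\Phi$ is a homotopy equivalence, with candidate inverse $\Psi(x, z_1, \ldots, z_n) = (x, m(x, z_1), \ldots, m(x, z_n))$. I would argue this via James's theorem \cite{MR0133132}, which gives $[X^{n+1}, X]$ the structure of an algebraic loop under the product $\cdot$ induced by $m$. To obtain $\Phi \circ \Psi \simeq \mathrm{id}$ it suffices to show, for each $j$, that $D \circ (p_1, m \circ (p_1, p_{j+1})) \sim p_{j+1}$ in $[X^{n+1}, X]$. Both maps solve the equation $p_1 \cdot w = m \circ (p_1, p_{j+1})$---the left-hand candidate by direct use of $\pi_1 \cdot D \sim \pi_2$---so loop uniqueness forces them to be homotopic. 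The reverse composition $\Psi \circ \Phi \simeq \mathrm{id}$ follows symmetrically from the identity $\pi_1 \cdot D \sim \pi_2$ applied coordinatewise.

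Finally, to identify the fibre inclusion with $\Delta_{n+1}$, I would compute
\[
\Phi \circ \Delta_{n+1}(x) = \bigl(x, D(x, x), \ldots, D(x, x)\bigr),
\]
and observe that $D \circ \Delta$ is null-homotopic as a map $X \to X$: pulling $\pi_1 \cdot D \sim \pi_2$ back along $\Delta \colon X \to X \times X$ gives $\mathrm{id}_X \cdot (D \circ \Delta) \sim \mathrm{id}_X$ in $[X, X]$, and the constant map at $x_0$ is a second solution to $\mathrm{id}_X \cdot w = \mathrm{id}_X$ (using the strict unit), so uniqueness in the loop $[X, X]$ forces $D \circ \Delta \sim [x_0]$. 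Therefore $\Phi \circ \Delta_{n+1} \simeq i$, whence $\Delta_{n+1}$ does represent the fibre inclusion of $f_n$, as claimed.

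The main obstacle I expect is the discipline required in using James's loop structure: without homotopy associativity one cannot ``cancel'' or ``multiply by an inverse,'' and every identification of two classes in $[A, X]$ must be achieved by exhibiting them as two solutions to a common equation $a \cdot w = b$ and invoking uniqueness. All three key identifications above (the two compositions involving $\Phi$ and $\Psi$, and the nullity of $D \circ \Delta$) fit this pattern, which is what makes the argument go through for a general, possibly non-associative, H-space.
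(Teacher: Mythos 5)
Your argument is correct, and it takes a genuinely different route from the paper's. The paper's (sketch) proof stays inside the path-space picture of the main theorem: it asserts that the comparison map $\theta\colon Q \to PX$, $\theta(x,\mathbf{x},\alpha)=m(x,\alpha)$, from the pullback $Q$ of $\mathcal{P}_n$ along $f_n$ is a homotopy equivalence, and then replaces $Q$ by $PX\simeq X$, $q$ by $P_{n+1}$ and in turn by $\Delta_{n+1}$, and $\mathcal{P}X$ by a point. You instead factor $f_n=\pi\circ\Phi$ through the self-map $\Phi(x,x_1,\dots,x_n)=(x,D(x,x_1),\dots,D(x,x_n))$ of $X^{n+1}$, prove $\Phi$ is a homotopy equivalence with inverse $\Psi$ via the uniqueness of solutions in James's algebraic loop $[A,X]$, and read off the homotopy fibre of $f_n$ from the strict fibre of the projection $\pi$. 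This buys two things the paper's sketch leaves implicit: a complete, path-space-free verification of the key equivalence (the paper does not prove $\theta$ is an equivalence), and an explicit identification of the fibre inclusion with $\Delta_{n+1}$ via the computation $D\circ\Delta\sim \mathrm{const}_{x_0}$, again by loop uniqueness applied to $1_X\cdot w = 1_X$ together with the strict unit. The algebraic engine (loop uniqueness, never cancellation) is the same in both proofs, but your decomposition is more elementary and self-contained; the only points worth making explicit are that $D$ may be chosen as a pointed map so that all the classes you compare live in pointed homotopy sets, and that the coordinatewise homotopies assemble into homotopies of maps into the product $X^{n+1}$, both of which are routine.
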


\begin{proof}[Sketch Proof]  One shows that the map $\theta \colon Q \to PX$, as in diagram (\ref{eq: triangle}) in the proof of \thmref{introthm: TC H-space}, is a homotopy equivalence.  
Now by standard results on homotopy pullbacks, we may replace $Q$ in the pullback diagram (\ref{eq: square}) by the homotopy equivalent space $PX$ which, in turn, may be replaced by $X$, and the map $q$ by $P_{n+1}$ and, in turn, $\Delta_{n+1}$.  As usual, we may further replace $\mathcal{P}X$ by a point.
\end{proof}

\begin{proof}[Proof (bis) of \thmref{introthm: TC H-space}]
From the homotopy pullback of \propref{prop:difference pullback}, it follows that $\secat(\Delta_{n+1})\leq \secat(* \to X^n)$, that is, that $\TC_{n+1}(X) \leq \cat(X^n)$. 
\end{proof}

\section{Shear Maps and Homotopy Actions}
\label{sec: Homotopy Transitive}

In this section we generalize \thmref{introthm: TC H-space}.   Our first step in this direction is to generalize the condition that $X$ be an H-space.
Suppose we have a (right) action of a space $Y$ on a space $X$, with strict (right) unit.  That is, we have a map
$$A \colon X \times Y \to X$$
that satisfies $A(x, y_0) = x$, for all $x \in X$ and for $y_0 \in Y$ the basepoint.  Write $\pi_1, \pi_2\colon X \times X \to X$  and $\pi^1\colon X \times Y \to X$ for the projections---preserving previous notation but also distinguishing the distinct domains by our notation.   Then we may define a corresponding \emph{shear map}
$$\sh\colon X \times Y \to X \times X, $$
by $\sh = (\pi^1, A)$, so that $\sh(x,y) = \big(x, A(x,y)\big)$ for $(x,y) \in X \times Y$.  

\begin{definition}
For an action $A \colon X \times Y \to X$ as above, we call a map $D \colon X \times X \to Y$ a \emph{(homotopy) difference (map)} if it satisfies 
$$A \circ (\pi_1, D) \sim \pi_2 \colon X \times X \to X.$$
If an action admits a difference, we say the action is \emph{homotopy-transitive}.
\end{definition}

\begin{remark}
In other words, a homotopy difference map satisfies the identity $A(x, D(x,x')) = x'$ up to homotopy; we think of $D$ as continuously assigning, to each pair $(x,x')$, an element that acts on $x$ so as to ``translate it" to $x'$.   If $X = Y = G$, a topological group, and if the action is the group multiplication, then we may set $D(x,y) = x^{-1}y$.  We used the difference map $D \colon X \times X \to X$ in the proof of \thmref{introthm: TC H-space}, in the context of an H-space $X$ acting on itself by the multiplication.  
\end{remark} 

\begin{lemma}
Let $A \colon X \times Y \to X$ be an action with strict unit and shear map $\sh = (\pi^1, A)\colon X \times Y \to X \times X$.  
The shear map admits a right homotopy inverse if and only if the action admits a difference $D\colon X \times X \to Y$.
\end{lemma}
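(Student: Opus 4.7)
The plan is to unpack both directions directly from the definition of the shear map, which has $\sh = (\pi^1, A)$, so that any map into $X \times Y$ can be written in coordinates and $\sh$ applied componentwise.

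For the ($\Leftarrow$) direction, suppose $D \colon X \times X \to Y$ is a difference, so $A \circ (\pi_1, D) \sim \pi_2$. The natural candidate for a right homotopy inverse is $s = (\pi_1, D) \colon X \times X \to X \times Y$. Then
\[
\sh \circ s = (\pi^1, A) \circ (\pi_1, D) = \bigl(\pi_1,\, A\circ(\pi_1,D)\bigr) \sim (\pi_1, \pi_2) = \mathrm{id}_{X \times X},
\]
which is exactly what is required.

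For the ($\Rightarrow$) direction, suppose $s \colon X \times X \to X \times Y$ satisfies $\sh \circ s \sim \mathrm{id}_{X \times X}$. Write $s = (s_1, s_2)$ in coordinates, with $s_1 \colon X \times X \to X$ and $s_2 \colon X \times X \to Y$. Then $\sh \circ s = (s_1, A\circ(s_1, s_2))$, so the homotopy $\sh \circ s \sim (\pi_1,\pi_2)$ gives the two component relations $s_1 \sim \pi_1$ and $A \circ (s_1, s_2) \sim \pi_2$. Feeding the first into the second (via continuity of $A$ applied to a homotopy in the first coordinate) yields $A \circ (\pi_1, s_2) \sim A \circ (s_1, s_2) \sim \pi_2$, so $D := s_2 \colon X \times X \to Y$ is a difference map.

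There is no real obstacle here; the only mild care needed is in the forward direction, where one must observe that homotoping the first coordinate from $s_1$ to $\pi_1$ inside $A \circ (s_1, s_2)$ preserves the homotopy class, which is immediate from the continuity of $A$. The argument uses only that $\sh = (\pi^1, A)$ and not the unit condition on $A$, so the strict unit hypothesis plays no role in this lemma (it will matter in subsequent constructions).
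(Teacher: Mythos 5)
Your argument is correct and follows essentially the same route as the paper's proof: in both directions you decompose the candidate map into coordinates, use $\sh = (\pi^1,A)$ componentwise, and in the forward direction transport the homotopy $s_1 \sim \pi_1$ through $A$ exactly as the paper does. Your closing observation that the strict unit plays no role in this particular lemma is also accurate.
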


\begin{proof}
In fact, homotopy classes of  right inverses to the shear map correspond bijectively to homotopy classes of differences, under the identity $[\sigma] = [(\pi_1, D)]$.
Remark that we may write $1_{X \times X} = (\pi_1, \pi_2)\colon X \times X \to X \times X$.  Now suppose given a right inverse $\sigma = (\sigma_1, \sigma_2)$ of $\sh\colon X \times X \to X \times Y$.  Define $D = \sigma_2\colon X \times X \to Y$.  Then we have
\begin{align*}
(\pi_1, \pi_2) = 1_{X \times X} \sim \sh\circ\sigma &= ( \pi^1,  A)\circ(\sigma_1,  D) \sim \big(  \sigma_1, A \circ(\sigma_1, D) \big).
\end{align*} 
Projecting onto first coordinates gives $\pi_1 \sim \sigma_1$, and then projecting onto second coordinates gives $\pi_2 \sim  A\circ(\sigma_1,  D) \sim A\circ(\pi_1,  D)$, so that $D$ is a difference.  Evidently, homotopic right inverses result in homotopic differences.  

Conversely,  suppose given a difference $D \colon X \times X \to Y$, and define $\sigma = (\pi_1, D) \colon$ $X \times X \to X \times Y$.  Then we have $\sh\circ \sigma =  ( \pi^1,  A)\circ(\pi_1,  D) = \big( \pi_1,  A\circ(\pi_1,  D)\big) \sim ( \pi_1,  \pi_2) =  1_{X \times X}$.  Once again, it is evident that homotopic differences result in homotopic right inverses.
\end{proof}

If an action $A \colon X \times Y \to X$ is homotopy-transitive, we may repeat the steps of the proof of \thmref{introthm: TC H-space}, suitably adapted, to obtain inequalities $\TC_{n+1}(X) \leq \cat(Y^n)$ for each $n \geq 1$, and in particular the inequality $\TC(X) \leq \cat(Y)$.  In principle, this appears a satisfactory generalization of \thmref{introthm: TC H-space}. However, the following observation reveals that these upper bounds would actually be weaker than that given by \thmref{introthm: TC H-space}, in all cases in which they apply.

\begin{lemma}\label{lem: Homotopy transitive implies H-space}
Let $A\colon X\times Y \to X$ be a homotopy-transitive action.  Then
\begin{enumerate}
\item $X$ is a retract of $Y$; and 
\item $X$ is an H-space.
\end{enumerate} 
\end{lemma}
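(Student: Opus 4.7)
The plan is to construct the retraction data for (1) explicitly in terms of $A$ and the difference map $D$, and then to use the same section to pull back the action and produce an H-space multiplication on $X$ for (2).

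For (1), I would set $i\colon X \to Y$ to be $i(x) = D(x_0, x)$ and $r\colon Y \to X$ to be $r(y) = A(x_0, y)$, where $x_0 \in X$ and $y_0 \in Y$ are the basepoints. Restricting the difference identity $A \circ (\pi_1, D) \sim \pi_2$ along the inclusion $\{x_0\} \times X \hookrightarrow X \times X$ immediately yields $r \circ i(x) = A(x_0, D(x_0, x)) \sim x$, so $r \circ i \sim 1_X$ and $X$ is a homotopy retract of $Y$.

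For (2), I would pull back the action along $i$ to define $m\colon X \times X \to X$ by $m(x, x') = A(x, i(x'))$. The left unit condition is immediate from (1) since $m(x_0, x') = r\bigl(i(x')\bigr) \sim x'$. The right unit condition $m(-, x_0) \sim 1_X$ reduces to showing $A(-, D(x_0, x_0)) \sim 1_X$; given any path $\gamma\colon I \to Y$ from $D(x_0, x_0)$ to $y_0$, the map $(x,t) \mapsto A(x, \gamma(t))$ provides the required homotopy from $A(-, D(x_0, x_0))$ to $A(-, y_0) = 1_X$, using the strict unit hypothesis. The main obstacle is producing this path $\gamma$: the difference identity evaluated at $(x_0, x_0)$ supplies only a path in $X$ between $A(x_0, D(x_0, x_0))$ and $x_0 = A(x_0, y_0)$, not a path in $Y$ between $D(x_0, x_0)$ and $y_0$. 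In the CW setting of the paper, where the relevant spaces are path-connected, the path $\gamma$ exists automatically, and the construction delivers the desired H-space multiplication on $X$.
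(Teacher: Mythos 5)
Your proof is correct and follows essentially the same route as the paper: the retraction is the orbit map $a = A\circ i_2$ with section $j = D\circ i_2$, and the multiplication is $m = A\circ(1\times j)$, with the left unit coming from the retraction and the right unit requiring an adjustment because $D(x_0,x_0)$ need not equal $y_0$. Your fix for that last point (choosing a path $\gamma$ in $Y$ from $D(x_0,x_0)$ to $y_0$ and homotoping via $A(x,\gamma(t))$) is a perfectly good substitute for the paper's appeal to well-pointedness to replace $j$ by a based map; both rely on the same implicit connectedness of $Y$.
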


\begin{proof}
Let $\sigma = (\pi_1, D)$ be a right inverse of the shear map $(\pi^1, A)$, so that $D\colon X \times X \to Y$ is a difference that satisfies $A\circ (\pi_1,  D) \sim \pi_2$.  Then define $j = D\circ i_2\colon X \to Y$.  Also, let $a = A\circ i_2\colon Y \to X$ be the usual orbit map of the action.  We observe that $i_2\circ D\circ i_2 \sim \sigma\circ i_2\colon X \to X \times X \to X \times Y$, and hence we have
\begin{align*}
a\circ j & \sim (A\circ i_2) \circ (D \circ i_2) = A \circ (i_2\circ D\circ i_2)\\
& \sim (\pi_2\circ \sh) \circ (\sigma\circ i_2) = \pi_2\circ (\sh \circ \sigma) \circ i_2  \sim \pi_2\circ i_2 = 1_X.
\end{align*}
Thus, the orbit map $a \colon Y \to X$ is a retraction of $Y$ onto $X$.

For (2), we define $m = A \circ (1\times  j) \colon X \times X \to X$.  For $m$ to be a multiplication, we only need check that it satisfies the two-sided unit condition: $m\circ i_1 \sim 1_X \sim m\circ i_2 \colon X \to X$.   The second identity is what we have just shown: $a\circ j \sim 1_X$.  The first follows immediately, from the assumptions that $j$ is a based map (which we may assume, without loss of generality, if spaces are well-pointed) and that the action has a strict unit.
\end{proof}

To generalize \thmref{introthm: TC H-space}, then, we go further and also relax the requirement of a section (right homotopy inverse) of the shear map.  We use instead local sections, exactly as in the notion of sectional category.  This leads to local differences, which may be used in the same way as their global counterparts, with suitable adaptations of the argument.  The upshot is the second main result enunciated in \secref{sec:intro}, whose proof we now give.   Notice that the result places no assumption on the action: although the conclusion may not be particularly strong, it has the advantage of applying very generally.

\begin{proof}[Proof of \thmref{introthm: TC secat shear}]
Suppose that $\secat(\sh_{n+1}) = m$, so that we have $m+1$ open sets $\{ U_0, \ldots, U_m \}$ which cover $X^{n+1}$ and for each of which we have a local section of $\sh_{n+1}$.  Fix an $i$, and write the corresponding local section of $\sh_{n+1}$ as $\sigma = (\sigma_1, D_1, \ldots, D_n) \colon U_i \to X \times Y^n$, to define maps $D_j \colon U_i \to Y$.  As in \secref{sec:TC} and the enunciation of \thmref{introthm: TC secat shear} in \secref{sec:intro}, write $p_k\colon X^{n+1} \to X$, $p^1\colon X\times Y^n \to X$, and $p^{k\geq 2}\colon X\times Y^n \to Y$ for the projections.  For the inclusion $\mathrm{incl}\colon U_i \to X^{n+1}$,   
\begin{align*}\mathrm{incl} \sim \sh_{n+1}\circ \sigma &= 
\big( p^1, A\circ(p^1, p^2), \ldots, A\circ(p^1, p^{n+1})  \big)\circ (\sigma_1, D_1, \ldots, D_n) \\ 
&=\big( \sigma_1, A\circ(\sigma_1, D_1), \ldots, A\circ(\sigma_1, D_{n})  \big).
\end{align*}
Equating coordinates yields $\sigma_1 \sim p_1 \colon U_i \to X$, and then $A\circ(p_1, D_{j}) \sim p_{j+1}\colon U_i \to X$ for each $j = 1, \ldots, n$.  In this way, the $D_j$ are thus \emph{local differences} on $U_i$.   
Write $D^i = ( D_1, \ldots,  D_{n}) \colon U_i \to Y^n$, 
and form the pullback
\begin{equation}\label{eq: pullback_i}
\xymatrix{ Q_i \ar[d]_{q_i} \ar[r] & \mathcal{P}Y \ar[d]^{\mathcal{P}_n}\\
U_i \ar[r]_-{D^i} & Y^n, }
\end{equation}
so that 
$$Q_i = \{ (x, x_1, \dots, x_n, \alpha) \in U_i \times \mathcal{P}Y \mid \alpha(0) = y_0, \alpha(t_j) = D_j(x,x_1, \dots, x_n)  \},$$
and $q_i \colon Q_i \to U_i$ is just projection on the first $(n+1)$ coordinates.  
Finally, define a map $\theta_i\colon Q_i \to PX$
by setting $\theta_i\big( (x, x_1, \dots, x_n, \alpha) \big) = A(x, \alpha)$,  which gives a path in $X$ evaluated as $A(x, \alpha)(t) = A\big(x, \alpha(t)\big)$ for each $t \in I$. Write the $n$-tuple $(x_1, \dots, x_n)$ as $\mathbf{x}$, and $(x, x_1, \dots, x_n, \alpha)$ as $(x, \mathbf{x}, \alpha)$.  Then composing $\theta_i$ and $P_{n+1}\colon PX \to X^{n+1}$ gives  
\begin{align*}
P_{n+1}\circ\theta_i (x, \mathbf{x}, \alpha) &= \big(A(x, \alpha(0)), A(x, \alpha(t_1)), \ldots, A(x, \alpha(t_n)) \big)\\
 &= \big(A(x,y_0), A(x, D_1(x,\mathbf{x})), \ldots, A(x, D_n(x,\mathbf{x})) \big)\\ 
 &= \big(p_1, A\circ (p_1, D_1),  \ldots, A(p_1, D_n) \big)\circ q_i (x, \mathbf{x}, \alpha), 
\end{align*}
which gives $P_{n+1}\circ\theta_i \sim \mathrm{incl}\circ q_i$ as maps into $X^{n+1}$, from the argument above diagram (\ref{eq: pullback_i}).  Since $P_{n+1}$ is a fibration, we may replace $\theta_i$ by a homotopic map such that $P_{n+1}\circ\theta_i = \mathrm{incl}\circ q_i$, and thus we have a commutative diagram
$$\xymatrix{ Q_i \ar[d]_{q_i} \ar[r]^-{\theta_i}  & PX \ar[d]^{P_{n+1}}\\
 U_i \ar[r]_-{\mathrm{incl}} & X^{n+1}.}$$
From this diagram, local sections of $q_i$ compose with $\theta_i$ to give local sections of $P_{n+1}$.  Recall that the  steps taken thus far were for a typical $U_i$ in the original $(m+1)$-fold cover of $X^{n+1}$. It follows that we have an upper bound
$$\TC_{n+1}(X) = \secat(P_{n+1})  \leq   \sum_{i=0}^m (\secat(q_i) + 1) - 1.$$
From pullback diagram (\ref{eq: pullback_i}), for each $i$ we have $\secat(q_i) \leq \cat(Y^n)$.  Hence we have    
$$\TC_{n+1}(X)  \leq \big(\secat(\sh_{n+1}) + 1\big) \big(\cat(Y^n) + 1\big) - 1. \qed$$
\renewcommand{\qed}{}\end{proof}

Observe that \thmref{introthm: TC H-space} is indeed a special case of \thmref{introthm: TC secat shear}: in the former case, we have $Y = X$ acting transitively on itself, and $\secat(\sh_{n+1}) = 0$ (the shear map admits a section). 

\begin{remark}\label{rem: cat of difference}
An in-principle finer upper bound on  $\TC_{n+1}(X)$ is obtained by returning to diagram (\ref{eq: pullback_i}) in the proof above and observing that, for each $i$, we have $\secat(q_i) = \cat(D^i)$.  Here, $\cat(D^i)$ denotes the \emph{L-S category of the map} $D^i$; the equality follows from standard results about $\secat(-)$ \cite[Prop.9.18]{CLOT03}.   This yields the upper bound $\TC_{n+1}(X)  \leq \sum_{i=0}^m (\cat(D^i) + 1) - 1$.
\end{remark}

\section{Open Questions}%
\label{sec:relcat}

\subsection{}
Is it possible to characterize spaces $X$ for which $\TC_{n+1}(X)$ takes either its minimum value of $\cat(X^n)$, or its maximum value of $\cat(X^{n+1})$?  We have established the minimum value is taken by $H$-spaces.  Examples at the other extreme include even-dimensional spheres, since we have $\TC_{n+1} (S^{2r}) = n+1$ \cite[Sec.4]{Rud10}, and $\cat\big((S^{2r})^{n+1}\big) = n+1$.
 
\subsection{} \exsref{exs: TC torus} prompt the following:  
For powers $X^k$ of a space $X$, (when)  is there a relation (such as equality) between $\TC_n(X^k)$ and $\TC_{k+1}(X^{n-1})$?  

As regards this question, notice that the standard inequalities 
$$\cat(X^{k(n-1)}) \leq \TC_n(X^k) \leq \cat(X^{kn})$$
and
$$\cat(X^{k(n-1)}) \leq \TC_{k+1}(X^{n-1}) \leq \cat(X^{(n-1)(k+1)})$$
give a common lower bound, but different upper bounds.  Relations of a similar nature are given in \cite[Cor.3.5, Cor.3.6]{BGRT10} 

\subsection{}
From \propref{prop:difference pullback} (cf.~\remref{rem: cat of difference}) it follows that, for $X$ an H-space, we have that $\TC_{n+1}(X) = \cat(f_n)$ (as well as being equal to $\cat(X^n)$).    Are there other situations in which $\TC_{n+1}(X)$ may be identified as the L-S category of some auxiliary map or space?

\subsection{}
We mention briefly another invariant of the L-S type, namely \emph{relative category} (see \cite[Sec.7.2]{CLOT03}).  This is an invariant of a map $p\colon E \to B$, which we denote by $\relcat(p)$.  We omit the definition here, but record the general inequalities $\secat(p) \leq \relcat(p) \leq \cat(B) + 1$.  This invariant would seem of interest as regards $\TC(-)$ because of the resulting inequalities $\TC_{n}(X) \leq \relcat(\Delta_n) \leq \cat(X^n) + 1$.  Is it possible to identify situations in which $\relcat(\Delta)$ gives a better upper bound for $\TC_{n}(X)$ than the usual $\cat(X^n)$?



\providecommand{\bysame}{\leavevmode\hbox to3em{\hrulefill}\thinspace}
\providecommand{\MR}{\relax\ifhmode\unskip\space\fi MR }
\providecommand{\MRhref}[2]{%
  \href{http://www.ams.org/mathscinet-getitem?mr=#1}{#2}
}
\providecommand{\href}[2]{#2}

\end{document}